\title{Lipschitz Constants to Curve Complexes for Punctured Surfaces}
\author{Aaron D Valdivia}
\email{aaron.david.valdivia@gmail.com}
\address{Florida Southern College; 111 Lake Hollingsworth Drive; Lakeland, FL 33801-5698}
\newtheorem{thm}{Theorem}
\newtheorem{lem}{Lemma}
\begin{document}
\begin{abstract}
Given a surface $S_{g,n}$ there is a map $sys:\mathcal{T}_{g,n}\rightarrow\mathcal{C}_{g,n}$ where $\mathcal{T}_{g,n}$ is the Teichm\"{u}ller space with the Teichm\"{u}ller metric, $\mathcal{C}_{g,n}$ is the curve complex with the standard metric, and 
$$d_{\mathcal{C}_{g,n}}(sys(X),sys(Y))\leq Kd_{\mathcal{T}_{g,n}}(X,Y)+C.$$  
We give asymptotic bounds for the minimal value of $K$ which we denote $K_{g,n}\asymp\frac{1}{\log(\mid\chi_{g,n}\mid)}$ for sequences of surfaces with fixed genus and sequences of surfaces where the genus is a rational multiple of the punctures.  This generalizes work of Gadre, Hironaka, Kent, and Leininger where they give the same asymptotic bounds for closed surfaces.
\end{abstract}
\maketitle

Consider a surface $S_{g,n}$ of genus $g$ with $n$ punctures and negative Euler characteristic.  Let $\mathcal{T}(S_{g,n})$ be the Teichm\"{u}ller space with the Teichm\"{u}ller metric.  Let $\mathcal{C}(S_{g,n})$ be the curve complex with each edge given unit length.  We will investigate the map $sys: \mathcal{T}(S_{g,n})\rightarrow \mathcal{C}(S_{g,n})$ defined by assigning a hyperbolic metric one of its shortest curves, or systoles.  The map $sys$ is a coarse Lipschitz map \cite{MM} meaning
$$d_{\mathcal{C}_{g,n}}(sys(X),sys(Y))\leq Kd_{\mathcal{T}_{g,n}}(X,Y)+C$$ 
for some constants $K,C>0$.  In \cite{lip} the authors give asymptotic bounds for the minimal value of $K$, we will denote the minimal constant for the surface $S_{g,n}$ by $K_{g,n}$.  They show that for closed surfaces $K_{g,0}\asymp \frac{1}{\log(g)}$.  We will show that fixing the genus and increasing puctures or if the genus is a rational multiple of the punctures similar results follow.

\begin{thm}
If the genus is fixed, $g\geq 2$ or $g=rn$ for some $r\in\mathbb{Q}$ then the constant $K_{g,n}$ has behavior
$$K_{g,n}\asymp \frac{1}{\log(\mid\chi_{g,n}\mid)}.$$
\end{thm}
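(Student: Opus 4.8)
The plan is to treat the upper and lower bounds on $K_{g,n}$ separately, following the template of \cite{lip} and supplying the estimates peculiar to punctured surfaces. The elementary half is the mechanism for the lower bound: for any pseudo-Anosov $\phi$ on $S_{g,n}$ with dilatation $\lambda(\phi)$ and stable translation length $\ell_{\mathcal{C}}(\phi)$ on $\mathcal{C}_{g,n}$, the map $sys$ is $\phi$-equivariant, so applying the coarse Lipschitz inequality to $X$ and $\phi^{m}X$ for $X$ on the Teichm\"{u}ller axis of $\phi$ and letting $m\to\infty$ gives
$$\ell_{\mathcal{C}}(\phi)\leq K_{g,n}\log\lambda(\phi),\qquad\text{hence}\qquad K_{g,n}\geq \frac{\ell_{\mathcal{C}}(\phi)}{\log\lambda(\phi)}.$$
Thus to bound $K_{g,n}$ from below it is enough to exhibit, for the surfaces in question, pseudo-Anosov maps whose ratio $\ell_{\mathcal{C}}/\log\lambda$ is of order $1/\log(\mid\chi_{g,n}\mid)$.

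For the upper bound, that $K_{g,n}$ is of order at most $1/\log(\mid\chi_{g,n}\mid)$, I would run the Teichm\"{u}ller-geodesic argument of \cite{lip} directly. Given $X,Y$, one tracks the systole along the geodesic from $X$ to $Y$ and estimates the curve-complex distance traversed using Rafi's description of the geodesic through the thick--thin decomposition together with the hyperbolicity of $\mathcal{C}_{g,n}$ from \cite{MM}. The logarithmic savings in \cite{lip} come from a Bers-type bound on how much Teichm\"{u}ller time is needed to replace the systole by a disjoint curve; for cusped surfaces the thin parts are still indexed by short geodesics and the cusps introduce none, so the argument transfers once the genus is replaced throughout by $\mid\chi_{g,n}\mid$ and the appropriate Bers and injectivity-radius estimates for $S_{g,n}$ are inserted. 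The output is a constant $K$ of size $1/\log(\mid\chi_{g,n}\mid)$ valid in both families.

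The substance is the lower bound, where by the first paragraph it suffices to construct, for each surface in the two families, a pseudo-Anosov $\phi_{g,n}$ with
$$\ell_{\mathcal{C}}(\phi_{g,n})\geq 1\qquad\text{and}\qquad \log\lambda(\phi_{g,n})\leq C\log(\mid\chi_{g,n}\mid),$$
i.e.\ with dilatation at most polynomial in $\mid\chi_{g,n}\mid$. In the fixed-genus family I would take the $n$ punctures in a symmetric configuration and let $\phi_{g,n}$ combine a cyclic symmetry with a bounded number of Dehn twists, so that its action on an invariant train track has Perron--Frobenius matrix whose largest eigenvalue is polynomial in $n\asymp\mid\chi_{g,n}\mid$. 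In the family $g=rn$ I would instead produce $\phi_{g,n}$ by taking covers of, or gluing fixed blocks to, the extremal closed-surface examples underlying \cite{lip}, choosing the combinatorics so that $g=rn$ holds while $\log\lambda$ stays of order $\log(\mid\chi_{g,n}\mid)$. In each case the dilatation bound is a Perron--Frobenius computation, while the lower bound $\ell_{\mathcal{C}}(\phi_{g,n})\geq 1$ is established via subsurface projections and the bounded geodesic image theorem of \cite{MM}: one checks that the stable and unstable laminations fill robustly enough that $sys(X)$ and $\phi_{g,n}(sys(X))$ cannot be joined by an edge in $\mathcal{C}_{g,n}$.

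The main obstacle is obtaining these two properties simultaneously. A polynomial dilatation bound forces $\phi_{g,n}$ to be assembled from relatively few twists, which drives it toward reducibility or toward vanishing curve-complex translation -- the regime of minimal-dilatation maps, whose ratio $\ell_{\mathcal{C}}/\log\lambda$ is far too small. The delicate point is therefore to inject enough mixing to keep $\ell_{\mathcal{C}}(\phi_{g,n})\geq 1$ without spending more than a bounded multiple of $\log(\mid\chi_{g,n}\mid)$ in stretching, and it is exactly here that the two hypotheses on $(g,n)$ are used, since they are what permit a uniform family realizing both bounds.
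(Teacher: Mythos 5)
Your skeleton agrees with the paper's: the lower bound comes from $K_{g,n}\geq \ell_{\mathcal{C}}(\phi)/\log\lambda(\phi)$ (your first paragraph is exactly the paper's Lemma 6, quoted from \cite{lip}) applied to well-chosen pseudo-Anosov families, and the upper bound from subdividing a Teichm\"{u}ller geodesic into steps over which the systole moves a bounded distance in $\mathcal{C}_{g,n}$. But in each half the specific input that makes the argument work for these two families is missing. For the upper bound you assert the argument of \cite{lip} ``transfers'' once Rafi's thick--thin description, hyperbolicity of $\mathcal{C}_{g,n}$, and suitable Bers estimates are inserted; none of these is what the paper uses, and the claim that ``the cusps introduce none'' and the hypotheses on $(g,n)$ are only needed for the lower bound is where the gap lies. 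The actual mechanism is the Adams--Schmutz bound $sl(S_{g,n})\leq 4\,\mathrm{arccosh}((6g-6+3n)/n)$, which makes the systole length bounded by a \emph{uniform constant} along a fixed-genus ray or a ray $g=rn$ (in contrast with closed surfaces, where it grows like $\log g$). This yields a collar width bound of the form $w\geq l(\beta)/N$, and combining the resulting inequality $i(\alpha,\beta)\,l_Y(\beta)/N< l_Y(\alpha)$ with Wolpert's inequality shows that if $d_{\mathcal{T}}(X,Y)\leq\log(|\chi_{g,n}|/N)$ then $i(\alpha,\beta)<|\chi_{g,n}|$; since two filling simple closed curves must intersect at least $|\chi_{g,n}|$ times (an Euler characteristic count), the two systoles do not fill and $d_{\mathcal{C}}(sys(X),sys(Y))\leq 2$. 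This is precisely where the restriction to the two families enters the upper bound, and your proposal does not supply it.

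For the lower bound you correctly identify the construction of examples as the substance, but you do not carry it out, and your target ($\ell_{\mathcal{C}}\geq 1$ with $\lambda$ polynomial in $|\chi_{g,n}|$, verified by subsurface projections) is not how the paper proceeds. Since the ratio $\ell_{\mathcal{C}}/\log\lambda$ is power-invariant your normalization is not wrong in principle, but for fixed genus the paper needs no new construction at all: it quotes Tsai's examples with $\log\lambda(\psi_{g,i})\leq C\log|\chi_{g,n}|/|\chi_{g,n}|$ and the general bound $\ell_{\mathcal{C}}(S_{g,n})\geq 1/|\chi_{g,n}|$ from \cite{Valdivia2}, whose ratio is already $\gtrsim 1/\log|\chi_{g,n}|$. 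For rational rays it builds explicit maps $\tau_B\circ\tau_A^{-i}\circ\tau_c$ on a fixed block $S_{p,q+2}$, passes to $i$-fold cyclic covers, and takes $i$-th roots; the dilatation is bounded by the column sums ($\leq 16i+9$) of an explicit train-track transition matrix, giving $\log\lambda(\phi_{r_i}^{1/i})\leq\log(16i+9)/i$, while a mixing-number argument gives $\ell_{\mathcal{C}}(\phi_{r_i}^{1/i})\geq 1/((2+2p+q)i)$. The essential feature, absent from your sketch, is that the twisting exponent grows linearly with the cover degree: with a bounded number of twists (as you propose) there is no reason the ratio comes out as large as $1/\log|\chi_{g,n}|$, and without some such quantitative coupling of the two bounds the lower half of the theorem is not established.
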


This is intereseting in light of other results pretaining to the lengths of geodesics in Teichm\"{u}ller space and the curve complex.  The logarithm of the minimal dilatation is similar for closed surfaces \cite{Pen91}, punctured spheres \cite{H-K}, and when the genus and punctures are related by some rational ray \cite{valdivia}, having behavior
$$\log(\delta_{g,n})\asymp\frac{1}{\mid\chi_{g,n}\mid}$$ 
but differs when the genus $g\geq 2$ is fixed \cite{Tsai} and is 
$$\log(\delta_{g,n_i})\asymp\frac{\log{\mid\chi_{g,n}\mid}}{\mid\chi_{g,n}\mid}.$$  
For asymptotic translation in the curve complex we get a similar phenomenon, if we consider closed surfaces \cite{gadretsai} or rational rays \cite{Valdivia2} we have the minimal asymptotic translation length 
$$L_{g,n}\asymp\frac{1}{\chi_{g,n}^2}.$$  
In the case of surfaces of fixed genus $g\geq 2$ \cite{Valdivia2} we have 
$$L_{g,n_i}\asymp\frac{1}{\mid\chi_{g,n}\mid}.$$  
The differing relationship between minimal dilatation and minimal asymptotic translation length for fixed genus surfaces is more coherrent in light of Theorem 1 and suggests that the Lipschitz map is finer when the punctures are increased.  As well, the bound on minimal dilatations given by Tsai \cite{Tsai} can be used with Theorem 1 to immediately get the upper bound for asymptotic translation length given in \cite{Valdivia2}.

The remainder of the paper is organized as follows, in section 2 we will establish an upper bound for $K_{g,n}$, in section 3 we will discuss bounding examples for the lower bound and conclude the proof.

\textbf{Acknowledgements:}  I would like to thank Federica Fanoni for an informative conversation about systoles of hyperbolic surfaces and Maxime Fortier Bourque for a number of helpful conversations on the material in this article.  I would also like to thank the Centro di Ricerca Matematica Ennio De Giorgi for their hospitality during the summer of 2014 without which the conversations mentioned would not have been possible.

\section{Upper bounds for $K_{g,n}$}

Adams \cite{adamssys} and Schmutz \cite{schmutzsys} give bounds for the maximal length systole in a hyperbolic surface.
\begin{thm}
If $S_{g,n}$ is a hyperbolic surface the maximum length of a systole, $sl(S_{g,n})$, is given by one of the following inequalities:

1.  If $n=1$, $sl(S_{g,n})\leq 2\mbox{arccosh}((6g-3)\slash 2).$

2.  If $n\geq 2$, $sl(S_{g,n})\leq 2\mbox{arccosh}((12g+5n+13)\slash 2).$

3.  If $n\geq 2$, $sl(S_{g,n})\leq 4\mbox{arccosh}((6g-6+3n)\slash n).$
\end{thm}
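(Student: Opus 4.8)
The plan is to bound the systole through an area comparison governed by the Gauss--Bonnet theorem: any complete hyperbolic metric on $S_{g,n}$ has area $2\pi|\chi_{g,n}| = 2\pi(2g-2+n)$, so it suffices to exhibit embedded regions whose combined area grows with the systole length and then solve the resulting inequality for $sl(S_{g,n})$. Let $\gamma$ be a shortest closed geodesic, of length $\ell = sl(S_{g,n})$, and fix a point $p$ on $\gamma$. The two geometric ingredients I would package are (i) a metric ball centered at a point of $\gamma$, whose area is $2\pi(\cosh r - 1)$ for radius $r$, and (ii) embedded horoball neighborhoods of the $n$ cusps, where the region cut off by a horocycle of length $L$ has area exactly $L$. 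Because $\gamma$ is a shortest geodesic, a ball of radius slightly less than $\ell/2$ fails to embed only if a geodesic loop returns to $p$, and such a loop is either essential of length at least $\ell$ or peripheral around a nearby cusp; it is this dichotomy that separates the three cases.

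For the first two inequalities (the bounds of the form $2\,\mathrm{arccosh}$), I would locate the point $p$ on $\gamma$ that maximizes the distance to the union of the maximal embedded cusp neighborhoods and show that the ball about $p$ of radius as large as that distance (and at most $\ell/2$) embeds and is disjoint from those neighborhoods. Adding the area $2\pi(\cosh(\ell/2)-1)$ of this ball to the total horoball area and comparing with $2\pi|\chi_{g,n}|$ yields $\cosh(\ell/2)\leq c(g,n)$ with $c$ linear in $g$ and $n$; the different constants for $n=1$ versus $n\geq 2$ reflect the number of cusps, the total area available, and the number of horoball neighborhoods that must be simultaneously accommodated. Optimizing this balance and solving for $\ell$ should produce inequalities 1 and 2.

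For the third inequality, which dominates when the punctures are numerous ($n$ comparable to $|\chi_{g,n}|$), I would instead pass to an ideal triangulation of $S_{g,n}$. Such a triangulation has exactly $3|\chi_{g,n}| = 6g-6+3n$ geodesic edges and $2|\chi_{g,n}|$ ideal triangles of area $\pi$. By averaging, some cusp is incident to a number of edges on the order of $|\chi_{g,n}|/n$, and examining the embedded horoball at that cusp bounds the length of the shortest essential arc or curve passing through it by $\mathrm{arccosh}\bigl((6g-6+3n)/n\bigr)$ up to the factor of $4$ recorded in the statement. Since the systole cannot exceed this local bound, inequality 3 follows.

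The main obstacle is the interaction between $\gamma$ and the cusps: in a punctured surface a shortest geodesic can be forced to travel deep into thin cusp regions, so one cannot simply assert that a ball of radius $\ell/2$ around an arbitrary point of $\gamma$ embeds, and indeed the naive embedded-disk bound is invalid precisely because peripheral loops can be arbitrarily short. Making the choice of center $p$ and the accounting of (possibly overlapping) cusp horoballs precise---so that the embedded ball and the cusp neighborhoods are simultaneously disjoint and their areas genuinely add---is the delicate step, and it is exactly this bookkeeping that pins down the constants $6g-3$, $12g+5n+13$, and $(6g-6+3n)/n$ in the three regimes.
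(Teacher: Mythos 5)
The paper does not prove this statement at all: it is quoted from Adams and Schmutz, so there is no internal argument to compare yours against. Judged on its own terms, your proposal identifies the right family of ideas --- Gauss--Bonnet area accounting, embedded metric balls of area $2\pi(\cosh r - 1)$, horoball neighborhoods whose area equals their boundary length, and the dichotomy between essential and peripheral geodesic loops --- but it is a plan rather than a proof. None of the three constants $6g-3$, $12g+5n+13$, $(6g-6+3n)/n$ is derived; you explicitly defer ``the bookkeeping that pins down the constants,'' and that bookkeeping is the entire quantitative content of the theorem. In particular, for inequalities 1 and 2 you never control the multiplicity with which the ball of radius $\ell/2$ covers points once it wraps into a thin cusp region (this multiplicity estimate is where the coefficients of $g$ and $n$ come from), nor do you show that the ball you center at your chosen point $p$ and the maximal cusp neighborhoods are genuinely disjoint so that their areas add.

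For inequality 3 there is a more concrete flaw. Pigeonholing an ideal triangulation does produce a cusp incident to at most $6|\chi_{g,n}|/n$ edge ends, and hence a short loop encircling that single cusp; but such a loop is peripheral, has no geodesic representative, and therefore gives no upper bound on the systole. The bound $4\,\mathrm{arccosh}((6g-6+3n)/n)$ actually comes from exhibiting a short essential geodesic arc running into and back out of an embedded cusp neighborhood (each excursion costing about $2\,\mathrm{arccosh}(\cdot)$) and then taking the boundary of a regular neighborhood of that arc together with the cusp region(s) it joins, which roughly doubles the arc length; one must still verify that some boundary component is essential and non-peripheral so that it carries a closed geodesic at least as short. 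Your step ``the systole cannot exceed this local bound'' skips exactly this verification.
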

In turn this allows us to find bounds for the shortest length collar about one of the systoles of a surface.  We note that the collar length function \cite{bus},
$$w(x)=\sinh^{-1}\left(\frac{1}{\sinh(x\slash 2)}\right),$$
is monotone decreasing.  We also note that the inequality in part three of Theorem 2 bounds the length of a systole from above by a uniform constant for any fixed $g$ with $n\rightarrow\infty$ or for any sequence where $g=rn$ for some rational number $r$.  If systole length is bounded from above then the collar length is bounded from below.  This is summed up in the following lemma.

\begin{lem}
Given a fixed genus, $g$, or a sequence where $g=rn$ for some rational number, $r$, we have the width, $w$, of a collar about a systol, $\alpha\subset S_{g,n}$, is bounded below by
$$l(\alpha)\slash N\leq w.$$
For some constant N depending only on the ray in which the sequence lies. 
\end{lem}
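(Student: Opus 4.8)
The plan is to combine the uniform upper bound on systole length furnished by part 3 of Theorem 2 with the monotonicity of the collar width function $w(x)=\sinh^{-1}\!\left(1/\sinh(x/2)\right)$. The guiding observation is that along either family of surfaces the systole stays short, so its standard collar stays thick, and the desired comparison of a bounded quantity $l(\alpha)$ against a quantity $w$ that is bounded away from zero then holds for a suitable linear constant $N$.

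First I would bound the systole length uniformly. Since $\alpha$ is a systole of the chosen hyperbolic structure, $l(\alpha)$ is exactly the systole length of that structure, so $l(\alpha)\leq sl(S_{g,n})\leq 4\,\mbox{arccosh}\!\left((6g-6+3n)/n\right)$ by part 3 of Theorem 2. In the fixed-genus case $(6g-6+3n)/n = 3+(6g-6)/n$ decreases to $3$, while in the case $g=rn$ the argument equals $6r+3-6/n$, which increases to $6r+3$; in both situations the argument of $\mbox{arccosh}$ remains in a compact interval depending only on the ray, so there is a finite $L=L(g)$ or $L=L(r)$ with $l(\alpha)\leq L$ for every surface in the sequence.

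Next I would convert this into a lower bound on the collar. The collar lemma provides an embedded collar of half-width $w(l(\alpha))$ about $\alpha$, so $w\geq w(l(\alpha))$, and since $w(x)$ is monotone decreasing and $l(\alpha)\leq L$ we obtain $w\geq w(L)=:w_0>0$. Finally, setting $N=L/w_0$ yields $l(\alpha)/N\leq L/N = w_0\leq w$ for every surface in the sequence, with $N$ depending only on $L$ and $w_0$ and hence only on the ray. The only genuine content is the first step, and the main obstacle there is really bookkeeping: separating the two regimes and verifying that in each the argument of $\mbox{arccosh}$ is monotone in $n$ and stays bounded, so that a single constant $L$, and therefore a single $N$, works uniformly along the ray.
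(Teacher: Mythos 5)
Your proposal is correct and follows essentially the same route as the paper, which likewise derives the lemma by combining the uniform systole bound from part 3 of Theorem 2 along each ray with the monotone decrease of the collar width function $w(x)=\sinh^{-1}\left(1/\sinh(x/2)\right)$. Your explicit verification that $(6g-6+3n)/n$ stays in a compact interval in each regime, and the construction $N=L/w_0$, simply fill in details the paper leaves implicit.
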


Another short lemma gives us a bound on the intersection number for two filling curves in a punctured surface. 

\begin{lem}
If $\alpha$ and $\beta$ are simple closed curves filling $S_{g,n}$ then their intersection number in at least $-\chi_{g,n}$.
\end{lem}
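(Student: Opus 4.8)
The plan is to run an Euler characteristic count on the cell decomposition that $\alpha\cup\beta$ cuts out of the surface. First I would isotope $\alpha$ and $\beta$ into minimal position, so that the number of intersection points equals $i(\alpha,\beta)$ and no complementary bigons or monogons occur. I then regard $G=\alpha\cup\beta$ as an embedded graph whose vertices are the intersection points and whose edges are the subarcs of $\alpha$ and $\beta$ running between consecutive crossings. Since $\alpha$ and $\beta$ cross transversally, every vertex is $4$-valent, so if $V$ and $E$ denote the numbers of vertices and edges then $V=i(\alpha,\beta)$ and, counting edge-endpoints, $4V=2E$, giving $E=2i(\alpha,\beta)$.

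Next I would use the filling hypothesis to control the complementary faces. By definition, $\alpha\cup\beta$ filling $S_{g,n}$ means every component of $S_{g,n}\setminus G$ is either an open disk or a once-punctured disk; in particular each such region contains at most one puncture. Filling in the $n$ punctures to pass to the closed surface $S_g$, every once-punctured disk becomes an honest open disk, so $G$ together with its complementary regions becomes a genuine cell decomposition of $S_g$. Writing $F_0$ for the number of unpunctured disk faces and recalling there are exactly $n$ punctured ones, the total number of faces is $F=F_0+n$.

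Finally I would apply the Euler characteristic formula on the closed surface, namely $\chi(S_g)=V-E+F$, which reads $2-2g=-i(\alpha,\beta)+F_0+n$. Rearranging gives $i(\alpha,\beta)=F_0+(2g-2+n)=F_0-\chi_{g,n}$, and since $F_0\geq 0$ we conclude $i(\alpha,\beta)\geq -\chi_{g,n}$, as desired. I expect the only delicate point to be the justification of the cell structure: one must invoke minimal position to rule out monogon and bigon faces, and use the filling condition to guarantee that, after filling in the punctures, every complementary region is genuinely a disk rather than a region of positive genus or one carrying extra boundary structure. The bookkeeping with $V$, $E$, and $F$ is then routine.
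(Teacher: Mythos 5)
Your proof is correct and follows essentially the same route as the paper: an Euler characteristic count on the four-valent graph $\alpha\cup\beta$ with $V=i(\alpha,\beta)$, $E=2i(\alpha,\beta)$, and complementary discs and once-punctured discs. The only cosmetic difference is that you fill in the punctures and compute on the closed surface, whereas the paper works directly on $S_{g,n}$ by letting the punctured discs contribute zero to the Euler characteristic; the inequality $i(\alpha,\beta)\geq 2g-2+n$ drops out identically either way.
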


\begin{proof}
If $\alpha$ and $\beta$ fill $S_{g,n}$ then it is decomposed into $i(\alpha,\beta)$ vertices, $2i(\alpha,\beta)$ edges, $D$ discs and $P$ punctured discs.  Then $2g-2+n=2i(\alpha,\beta)-i(\alpha,\beta)-D\leq i(\alpha,\beta)$.
\end{proof}

Finally we will also need a way of estimating Teichmuller distance.

\begin{lem}\cite[Wolperts Inequality]{Wolpert}
If $X$ and $Y$ are elements of $\mathcal{T}(S_{g,n})$ and $l_X(\alpha)$ is the length of a curve $\alpha$ in the metric $X$, likewise for $Y$, then
$$l_Y(\alpha)\leq e^{d_\mathcal{T}(X,Y)}l_X(\alpha).$$
\end{lem}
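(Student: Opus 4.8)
The plan is to reduce the statement to the quasiconformal distortion of hyperbolic geodesic length and then invoke Teichm\"uller's theorem. Recall that $d_{\mathcal T}(X,Y)$ is, in the normalization under which the stated inequality holds, the logarithm of the minimal dilatation of a quasiconformal homeomorphism $f\colon X\to Y$ isotopic to the identity; fix such an extremal map with maximal dilatation $K$, so that $\log K = d_{\mathcal T}(X,Y)$. Since $l_X(\alpha)$ and $l_Y(\alpha)$ are the hyperbolic lengths of the geodesic representatives of one and the same isotopy class measured in the two metrics, and $f$ carries the class of $\alpha$ on $X$ to the class of $\alpha$ on $Y$, it suffices to establish the distortion estimate
$$l_Y(\alpha)\leq K\, l_X(\alpha)$$
for an arbitrary $K$-quasiconformal map and then to substitute $K=e^{d_{\mathcal T}(X,Y)}$.

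First I would pass to the universal cover $\mathbb{H}^2$. The map $f$ lifts to a $K$-quasiconformal self-map $\tilde f$ of $\mathbb{H}^2$ which extends to the boundary circle and conjugates the deck transformation $A$ representing $\alpha$ on $X$, a hyperbolic isometry of translation length $l_X(\alpha)$, to the deck transformation $A'$ representing $\alpha$ on $Y$, of translation length $l_Y(\alpha)$. The geodesic representatives correspond exactly to the axes of $A$ and $A'$, and $\tilde f$ sends the endpoints of the axis of $A$ to those of $A'$; hence $l_Y(\alpha)$ is determined by the boundary values of $\tilde f$, and the task is to bound it by $K\,l_X(\alpha)$. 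The cleanest route is the length--area (extremal length) method: the modulus of an essential embedded annulus is changed by at most the factor $K$ under a $K$-quasiconformal map, and on a hyperbolic surface the modulus of the maximal collar about a geodesic is a monotone function of that geodesic's hyperbolic length, which is the same monotonicity recorded in the collar width function $w$ used in Lemma 1. Transporting the collar about $\alpha$ in $X$ through $f$ and comparing moduli bounds $l_Y(\alpha)$ in terms of $l_X(\alpha)$ and $K$.

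The main obstacle is precisely this comparison. The hyperbolic metric is not pointwise quasi-invariant under quasiconformal maps, so one cannot simply integrate a pointwise distortion bound along the geodesic; the control must be made in the integrated, extremal-length sense, and the passage from the cleanly $K$-quasi-invariant modulus back to hyperbolic length has to be carried out carefully enough to preserve a clean multiplicative constant rather than leaving an additive error. This estimate is exactly the content of Wolpert's theorem. Granting the factor-$K$ distortion, substituting $K=e^{d_{\mathcal T}(X,Y)}$ yields $l_Y(\alpha)\leq e^{d_{\mathcal T}(X,Y)}l_X(\alpha)$, and exchanging the roles of $X$ and $Y$ gives the symmetric bound, so that length ratios along any isotopy class furnish the desired lower estimates on Teichm\"uller distance.
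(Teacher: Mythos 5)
The paper offers no proof of this lemma; it is quoted directly from Wolpert's paper, so there is no argument of the author's to compare yours against. Judged on its own, your proposal has the right overall shape --- reduce to the claim that a $K$-quasiconformal map distorts hyperbolic geodesic length by at most the factor $K$, then substitute the extremal dilatation of the Teichm\"uller map --- but the decisive step is never actually carried out. You write that ``the passage from the cleanly $K$-quasi-invariant modulus back to hyperbolic length has to be carried out carefully'' and then declare that ``this estimate is exactly the content of Wolpert's theorem.'' That is circular: the modulus-to-length comparison is precisely the thing to be proved, and deferring it to the theorem under proof leaves the argument empty at its core.

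Moreover, the specific mechanism you gesture at --- transporting the maximal \emph{embedded} collar about $\alpha$ through $f$ and invoking the monotonicity of the collar width function $w$ --- would not yield the clean multiplicative constant. Monotonicity gives only a qualitative comparison, and the embedded collar's modulus does not convert back to length with an exact factor. The standard device is the (non-embedded) annular cover $\mathbb{H}^2/\langle A\rangle$ of $X$ associated to $\alpha$: this is conformally a round annulus of modulus exactly $\pi/l_X(\alpha)$, the map $f$ lifts to a $K$-quasiconformal map between these covers, and Gr\"otzsch's inequality $\mathrm{mod}(\tilde Y_\alpha)\geq \mathrm{mod}(\tilde X_\alpha)/K$ then reads $\pi/l_Y(\alpha)\geq \pi/(K\,l_X(\alpha))$, i.e. $l_Y(\alpha)\leq K\,l_X(\alpha)$. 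Replacing your collar comparison with this exact identity closes the gap. One further caution: under the usual normalization $d_{\mathcal T}(X,Y)=\tfrac{1}{2}\log K$ the conclusion is $l_Y(\alpha)\leq e^{2d_{\mathcal T}(X,Y)}l_X(\alpha)$; your parenthetical about ``the normalization under which the stated inequality holds'' papers over a factor of $2$ that should be tracked explicitly, since it propagates into the constants of Lemmas 4 and 5.
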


The following lemmas are minor adaptations from the ones in \cite{lip} used to give the upper bound for closed surfaces.

\begin{lem}(cf \cite{lip})
If $d_{\mathcal{T}}(X,Y)\leq\log((\chi_{g,n})\slash N)$ then $d_\mathcal{C}(sys(X),sys(Y))\leq 2$.
\end{lem}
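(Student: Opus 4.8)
The plan is to reduce the curve-complex estimate to a bound on the geometric intersection number of the two systoles. Write $\alpha = sys(X)$ and $\beta = sys(Y)$. The key topological observation is that two simple closed curves lie within distance $2$ in $\mathcal{C}(S_{g,n})$ as soon as they \emph{fail to fill} the surface: a non-filling pair leaves a complementary subsurface that is not a union of discs and once-punctured discs, hence contains an essential non-peripheral simple closed curve $\gamma$; then $\gamma$ is disjoint from both $\alpha$ and $\beta$, so $d_\mathcal{C}(\alpha,\gamma)\le 1$ and $d_\mathcal{C}(\gamma,\beta)\le 1$, giving $d_\mathcal{C}(\alpha,\beta)\le 2$ (the cases $\alpha=\beta$ and $i(\alpha,\beta)=0$ being even easier). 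By the contrapositive of Lemma 3 it therefore suffices to show that under the hypothesis $d_\mathcal{T}(X,Y)\le\log(\mid\chi_{g,n}\mid/N)$ we have $i(\alpha,\beta)<\mid\chi_{g,n}\mid$.

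First I would bound $i(\alpha,\beta)$ using a collar argument carried out \emph{entirely in the metric $Y$}, so that only a single application of Wolpert's inequality is needed. In the regimes under consideration (fixed $g$, or $g=rn$), part $3$ of Theorem 2 bounds the systole length uniformly from above by a constant $c_0$; since the collar function $w$ is monotone decreasing, the collar of the $Y$-systole $\beta$ has width bounded below by a uniform constant $w_0=w(c_0)>0$. The $Y$-geodesic representative of $\alpha$ crosses the geodesic $\beta$ exactly $i(\alpha,\beta)$ times, and each crossing traverses this collar on both sides, so
$$l_Y(\alpha)\ \ge\ 2w_0\, i(\alpha,\beta).$$

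Second, I would control $l_Y(\alpha)$ from above. By Lemma 4, $l_Y(\alpha)\le e^{d_\mathcal{T}(X,Y)}l_X(\alpha)=e^{d_\mathcal{T}(X,Y)}sys(X)\le c_0\,e^{d_\mathcal{T}(X,Y)}$. Combining the two estimates gives $i(\alpha,\beta)\le \frac{c_0}{2w_0}\,e^{d_\mathcal{T}(X,Y)}$, and the hypothesis $d_\mathcal{T}(X,Y)\le\log(\mid\chi_{g,n}\mid/N)$ yields $i(\alpha,\beta)\le\frac{c_0}{2w_0 N}\mid\chi_{g,n}\mid$. Taking the constant $N$ of Lemma 1 large enough that $c_0/(2w_0 N)<1$ — these collar and systole constants depend only on the ray, so they can be absorbed into $N$ — forces $i(\alpha,\beta)<\mid\chi_{g,n}\mid$, completing the argument.

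The step I expect to require the most care is the topological reduction in the first paragraph: for punctured surfaces one must ensure that the essential curve produced in the complement of a non-filling pair is genuinely essential and non-peripheral, rather than a loop bounding a disc or encircling a single puncture, since peripheral curves are not vertices of $\mathcal{C}(S_{g,n})$. The other point to watch is the coordination of constants: measuring both the collar and the transverse geodesic in the single metric $Y$ is what keeps the estimate at a single exponential $e^{d_\mathcal{T}(X,Y)}$ rather than $e^{2d_\mathcal{T}(X,Y)}$, the latter being too weak for the stated hypothesis. This choice is essential, and I would make it explicit.
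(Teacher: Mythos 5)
Your proof is correct and follows essentially the same route as the paper: a collar-width lower bound on $l_Y(\alpha)$ in terms of $i(\alpha,\beta)$, then Wolpert's inequality, then the filling criterion of Lemma 3 to conclude $d_\mathcal{C}(sys(X),sys(Y))\le 2$. The only difference is bookkeeping --- the paper bounds the collar width below by $l_Y(\beta)/N$ and normalizes via the assumption $l_X(\alpha)\le l_Y(\beta)$, whereas you use a uniform collar width $w_0$ coming from the uniform systole bound and absorb the resulting constants into $N$; both work, and your explicit treatment of the non-filling-implies-distance-two step (including the peripheral-curve caveat) fills in details the paper leaves implicit.
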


\begin{proof}
Let $\alpha,\beta$ be systoles for $X$ and $Y$ respectively and assume $l_X(\alpha)\leq l_Y(\beta)$.
We have from bounds on the width of collars, 
$$\frac{i(\alpha,\beta)l_Y(\beta)}{N}< l_Y(\alpha).$$
We also have from Wolpert's inequality,
$$l_Y(\alpha)\leq e^{\log(\chi_{g,n}\slash N)}l_X(\alpha)=\frac{\chi_{g,n}l_X(\alpha)}{N}.$$
Combining these two we get
$$i(\alpha,\beta)<\chi_{g,n}\frac{l_X(\alpha)}{l_Y(\beta)}<\chi_{g,n}.$$
Therefore $\alpha$ and $\beta$ cannnot fill the surface.
\end{proof}

The upper bound for $K_{g,n}$ then follows.  

\begin{lem}(cf \cite{lip})
Given a surface $S_{g,n}$ and any two hyperbolic structures $X$ and $Y$ we have 
$$d_{\mathcal{C}}(sys(X),sys(Y))\leq\frac{2}{\log(\chi_{g,n}\slash N)}d_{\mathcal{T}}(X,Y)+2.$$
\end{lem}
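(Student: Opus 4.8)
The plan is to deduce this global Lipschitz bound from the local statement of the previous lemma by a standard subdivision (chaining) argument along a Teichm\"{u}ller geodesic. Write $D = d_\mathcal{T}(X,Y)$ and $L = \log(\chi_{g,n}/N)$, and observe that $L > 0$ for all surfaces in the families under consideration (those of sufficiently large complexity), so that dividing by $L$ is legitimate.

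First I would fix a Teichm\"{u}ller geodesic joining $X$ to $Y$, which exists and realizes the distance $D$ by Teichm\"{u}ller's theorem. Along it I would choose points $X = Z_0, Z_1, \dots, Z_m = Y$, with $m = \lceil D/L \rceil$, spaced so that consecutive points satisfy $d_\mathcal{T}(Z_{i-1}, Z_i) \le L$; this is possible precisely because we are moving along a distance-realizing geodesic, so the distances between successive subdivision points sum to $D$. Since each consecutive pair lies within distance $L = \log(\chi_{g,n}/N)$, the previous lemma applies to give $d_\mathcal{C}(sys(Z_{i-1}), sys(Z_i)) \le 2$ for every $i$.

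Summing over $i$ and invoking the triangle inequality in $\mathcal{C}$ then yields
$$d_\mathcal{C}(sys(X), sys(Y)) \le \sum_{i=1}^{m} d_\mathcal{C}(sys(Z_{i-1}), sys(Z_i)) \le 2m.$$
Finally, using $m = \lceil D/L \rceil \le D/L + 1$ I would conclude
$$d_\mathcal{C}(sys(X), sys(Y)) \le 2\left(\frac{D}{L} + 1\right) = \frac{2}{\log(\chi_{g,n}/N)}\, d_\mathcal{T}(X,Y) + 2,$$
which is exactly the claimed inequality.

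The argument is essentially formal once the preceding lemma is in hand, so I do not expect a genuine obstacle here: the substantive content — combining the collar-width lower bound, the intersection-number estimate, and Wolpert's inequality to show that the systoles of nearby structures cannot fill — has already been absorbed into that lemma. The only points that require care are ensuring that a genuine distance-realizing geodesic is used so that the subdivision distances add up correctly, and that $\log(\chi_{g,n}/N)$ is positive, both of which hold for the sequences of surfaces in the statement of the main theorem.
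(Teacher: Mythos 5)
Your proposal is correct and is essentially identical to the paper's proof: the paper likewise subdivides a path from $X$ to $Y$ into $M+1$ pieces (with $M\log(\chi_{g,n}/N)\leq d_{\mathcal{T}}(X,Y)<(M+1)\log(\chi_{g,n}/N)$), applies the preceding lemma to each consecutive pair, and concludes by the triangle inequality. Your extra care about using a distance-realizing geodesic and the positivity of $\log(\chi_{g,n}/N)$ is a welcome refinement but does not change the argument.
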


\begin{proof}
There is a non-negative integer $M$ such that $M\log(\chi_{g,n}\slash N)\leq d_{\mathcal{T}}(X,Y)<(M+1)\log(\chi_{g,n}\slash N)$.  Then consider $X=X_0,X_1\dots,X_{M+1}=Y$ in $\mathcal{T}(S_{g,n})$ such that $d_{\mathcal{T}}(X_{k-1},X_{k})\leq \log(\chi_{g,n}\slash N).$  Then by the triangle inequality the upperbound follows.  
\end{proof}

\section{Examples For Lower Bounds}
For the lower bound we will be considering pseudo-Anosov mapping classes and comparing their dilatations and asymptotic translation lengths.  The comparison gives us a lower bound as a ratio.
\begin{lem}\cite{lip}
For any pseudo-Anosov $\phi:S_{g,n}\rightarrow S_{g,n}$ we have 
$$K_{g,n}\geq\frac{l_{\mathcal{C}}(\phi)}{\log(\lambda(\phi))}.$$
\end{lem}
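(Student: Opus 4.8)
The plan is to exploit the fact that a pseudo-Anosov $\phi$ translates along a geodesic in both $\mathcal{T}(S_{g,n})$ and $\mathcal{C}(S_{g,n})$, and to push the coarse Lipschitz inequality through these two translation lengths. First I would fix a point $X$ lying on the Teichm\"uller axis of $\phi$, so that $\phi$ translates $X$ along a Teichm\"uller geodesic by exactly $\log(\lambda(\phi))$; iterating gives $d_{\mathcal{T}}(X,\phi^m X)=m\log(\lambda(\phi))$ for every positive integer $m$.

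Next I would use the fact that the systole map is equivariant with respect to the mapping class group. Since the action of $\phi$ preserves the length spectrum, $l_{\phi(X)}(\phi(\alpha))=l_X(\alpha)$, so a shortest curve for $X$ is carried to a shortest curve for $\phi(X)$, and one may take $sys(\phi^m X)=\phi^m(sys(X))$. Writing $\gamma=sys(X)$ and applying the coarse Lipschitz bound to the pair $X,\phi^m X$ then yields
$$d_{\mathcal{C}}(\gamma,\phi^m\gamma)=d_{\mathcal{C}}(sys(X),sys(\phi^m X))\leq K_{g,n}\,m\log(\lambda(\phi))+C.$$

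Finally I would divide by $m$ and let $m\to\infty$. By definition the left-hand side converges to the asymptotic translation length $l_{\mathcal{C}}(\phi)=\lim_{m\to\infty}d_{\mathcal{C}}(\gamma,\phi^m\gamma)/m$, while the additive term $C/m$ vanishes, giving $l_{\mathcal{C}}(\phi)\leq K_{g,n}\log(\lambda(\phi))$, which rearranges to the claimed inequality. The main subtlety to address is the equivariance step: the systole map is only coarsely defined because a surface may carry several shortest curves, so one must verify that any two choices of systole lie within uniformly bounded distance in $\mathcal{C}(S_{g,n})$. This bounded ambiguity can be folded into the additive constant $C$ and therefore disappears after dividing by $m$, so it does not affect the limit.
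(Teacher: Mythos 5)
Your proposal is correct, and it is essentially the argument of Gadre--Hironaka--Kent--Leininger: the paper itself gives no proof of this lemma, simply citing \cite{lip}, and the standard proof there is exactly your scheme of translating a point $X$ on the invariant Teichm\"uller axis by $\phi^m$, using equivariance of $sys$ and the coarse Lipschitz inequality, and dividing by $m$. Your handling of the coarse ambiguity in the choice of systole (absorbing it into the additive constant, which vanishes in the limit) is the right way to make the equivariance step rigorous.
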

In \cite{Tsai} Tsai gives a series of examples $\psi_{g,i}$ for any fixed $g\geq 2$ and all $i>31$ with dilatation bounded from above and below. 
$$\frac{\log(\chi_{g,n})}{C\chi_{g,n}}\leq\log(\lambda(\psi_{g,i}))\leq\frac{C\log(\chi_{g,n})}{\chi_{g,n}}$$ 
In \cite{Valdivia2} a lower bound is established for $l_{\mathcal{C}}(S_{g,n})$ for fixed genus, $g$. 
\begin{lem}\cite{Valdivia2}
For fixed genus, $g$, we have 
$$l_{\mathcal{C}}(S_{g,n})\geq\frac{1}{\mid\chi_{g,n}\mid}.$$
\end{lem}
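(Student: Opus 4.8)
The plan is to reduce the statement to a uniform lower bound on asymptotic translation lengths. Since $l_{\mathcal{C}}(S_{g,n})$ is the infimum of $l_{\mathcal{C}}(\phi)$ over all pseudo-Anosov maps $\phi:S_{g,n}\to S_{g,n}$, it suffices to produce a constant $c>0$, depending only on the ray in which the sequence lies, so that $l_{\mathcal{C}}(\phi)\geq c/|\chi_{g,n}|$ for every such $\phi$. I would obtain this by combining two ingredients: a lower bound on the dilatation $\lambda(\phi)$ valid in the fixed-genus regime, and a comparison between $l_{\mathcal{C}}(\phi)$ and $\log\lambda(\phi)$ built from geometric intersection numbers together with the filling lemma above (two simple closed curves that fill $S_{g,n}$ meet at least $|\chi_{g,n}|$ times).

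First I would fix a curve $\alpha$ and follow the orbit $\phi^{k}\alpha$. As $\phi$ is pseudo-Anosov, $\phi^{k}\alpha$ converges projectively to the unstable lamination and the geometric intersection numbers are comparable to powers of the dilatation,
\[
c_{\alpha}\,\lambda(\phi)^{k}\ \leq\ i(\alpha,\phi^{k}\alpha)\ \leq\ C_{\alpha}\,\lambda(\phi)^{k}.
\]
I would then use two conversions between intersection number and distance in $\mathcal{C}$: a Hempel-type logarithmic estimate $d_{\mathcal{C}}(\alpha,\beta)\leq 2\log_{2}i(\alpha,\beta)+2$, and the filling lemma, which guarantees that once $\alpha$ and $\phi^{k}\alpha$ fill (equivalently $d_{\mathcal{C}}(\alpha,\phi^{k}\alpha)\geq 3$) they meet at least $|\chi_{g,n}|$ times. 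Tracking the first filling time $k^{\ast}$ and inserting $i(\alpha,\phi^{k^{\ast}}\alpha)\leq C_{\alpha}\lambda(\phi)^{k^{\ast}}$ gives $k^{\ast}\geq \log|\chi_{g,n}|/\log\lambda(\phi)$ up to an additive constant; the aim is to upgrade this into a genuine lower bound of the form $l_{\mathcal{C}}(\phi)\geq c\,\log\lambda(\phi)/\log|\chi_{g,n}|$.

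The second ingredient is Tsai's fixed-genus minimal dilatation estimate, which in the form used above says that every pseudo-Anosov on $S_{g,n}$ satisfies $\log\lambda(\phi)\geq \log\delta_{g,n}\geq c'\,\log|\chi_{g,n}|/|\chi_{g,n}|$, since $\delta_{g,n}$ is the minimal dilatation on the surface. Substituting this into the comparison of the previous paragraph cancels the factor $\log|\chi_{g,n}|$ and leaves $l_{\mathcal{C}}(\phi)\geq c''/|\chi_{g,n}|$ uniformly over all $\phi$; taking the infimum yields the lemma up to the uniform constant implicit in the $\asymp$ conventions of the paper.

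The hard part is the direction of the comparison in the second paragraph. The intersection and filling estimates, read naively, bound $l_{\mathcal{C}}(\phi)$ from \emph{above} by $\log\lambda(\phi)/\log|\chi_{g,n}|$: small dilatation forces the distances $d_{\mathcal{C}}(\alpha,\phi^{k}\alpha)$ to grow slowly, and subadditivity of $k\mapsto d_{\mathcal{C}}(\alpha,\phi^{k}\alpha)$ only produces the free inequality $d_{\mathcal{C}}(\alpha,\phi^{k}\alpha)\geq k\,l_{\mathcal{C}}(\phi)$, which is circular for bounding $l_{\mathcal{C}}(\phi)$ below. What is actually needed is an honest lower bound on how fast this sequence grows, and this is exactly where the fixed-genus hypothesis is essential: the identical scheme for closed surfaces only gives $l_{\mathcal{C}}\geq c/|\chi|^{2}$, because there the dilatation can be as small as order $1/|\chi|$, whereas it is the larger minimal dilatation forced by the punctures that upgrades the exponent. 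I expect controlling this lower rate of escape to require the train-track (bounded-geometry) description of the axis of $\phi$ in $\mathcal{C}$ rather than the crude intersection bounds alone.
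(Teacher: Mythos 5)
Your plan does not close, and the gap is precisely the one you flag in your final paragraph: every step in your chain --- the asymptotics $i(\alpha,\phi^k\alpha)\leq C_\alpha\lambda(\phi)^k$, the Hempel-type bound $d_{\mathcal{C}}(\alpha,\beta)\leq 2\log_2 i(\alpha,\beta)+2$, and the filling lemma --- converts information about $\lambda(\phi)$ into \emph{upper} bounds on $d_{\mathcal{C}}(\alpha,\phi^k\alpha)$, while $d_{\mathcal{C}}(\alpha,\phi^k\alpha)\geq k\,l_{\mathcal{C}}(\phi)$ points the wrong way for your purpose. The intermediate inequality you would need, $l_{\mathcal{C}}(\phi)\geq c\,\log\lambda(\phi)/\log\mid\chi_{g,n}\mid$ uniformly over pseudo-Anosovs, is not merely unproven but false: all of its ingredients (the Euler-characteristic filling bound, the intersection-number growth rate, and a lower bound on minimal dilatation of order $1/\mid\chi\mid$ due to Penner) hold verbatim for closed surfaces, so the identical scheme would force $l_{\mathcal{C}}(\phi)\gtrsim 1/(\mid\chi\mid\log\mid\chi\mid)$ for \emph{every} pseudo-Anosov on a closed surface, contradicting the examples of Gadre and Tsai with $l_{\mathcal{C}}(\phi)\asymp 1/\chi^2\ll 1/(\mid\chi\mid\log\mid\chi\mid)$. (Your own sanity check is off here: your scheme applied to closed surfaces gives $1/(\mid\chi\mid\log\mid\chi\mid)$, not $1/\chi^2$.) Since the fixed-genus hypothesis enters your argument only through Tsai's dilatation bound at the last step, it cannot repair a comparison that is already invalid on a fixed surface; indeed on any single $S_{g,n}$ the ratio $l_{\mathcal{C}}(\phi)/\log\lambda(\phi)$ is not bounded below over all $\phi$.

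Note also that the paper does not reprove this lemma; it is imported from \cite{Valdivia2}. The actual argument there is the one you gesture at in your closing sentence, and it is the same mechanism this paper uses for its rational-ray examples via Lemma 8: take a maximal recurrent invariant train track $\sigma$ for $\phi$, apply the Masur--Minsky nesting lemma to convert the inclusion $\phi^r(P_\sigma)\subset int(P_\sigma)$ into $l_{\mathcal{C}}(\phi)\geq 1/r$, and then bound the mixing number $r$ by $C\mid\chi_{g,n}\mid$ for fixed genus by a counting argument on the branches of $\sigma$. Dilatation never appears; the bound is purely combinatorial. If you want to salvage the write-up, keep your first paragraph's reduction to a uniform bound over all pseudo-Anosovs, discard the dilatation comparison entirely, and carry out the train-track nesting estimate instead.
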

For rational rays we will need a set of examples each rational ray.  We denote these examples $\phi_{r_i}$ where $r$ is the slope of the ray and $i$ indexes the integral points along this ray.  Given a rational number $r=\frac{p}{q}$ such that $(p,q)=1$, consider a surface $S_{p,q+2}$.  We will consider cyclic coverings of a certain mapping class on this surface.  Focusing on the dilatation, the trick is to increase the number of Dehn twists about certain curve with the index along the rational ray to obtain the correct bounds.  First we consider the set of curves depicted in Figure 1.  The curves can be given a bicoloring and we will call the set of curves colored the same color as the curve labeled $2$ by $A$ and the rest except for the curve labeled $1$ by $B$ we simply denote the curve labeled $1$ by $c$.  It is clear that that mapping classes 
$$\tilde{\phi}_{r_i}:S_{p,q+2}\rightarrow S_{p,q+2}$$ 
$$\tilde{\phi}_{r_i}:=\tau_B\circ\tau_A^{-i}\circ\tau_c$$ 
are pseudo-Anosov, where $\tau_{x}$ is the Dehn twist about the curve or multicurve $x$.
\begin{figure}[h]
\centering\includegraphics{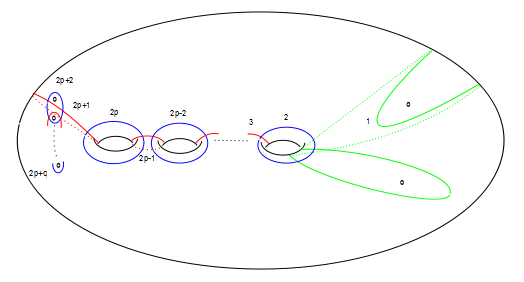}
\caption{A building block for rational rays.}
\end{figure}
For both bounds on the dilatation and asymptotic translation length we will need to use train tracks, see \cite{P-H}.  Here we will denote train tracks by $\sigma$, and the cone of measures on the train track by $P_{\sigma}$.  We will bound the dilation from above using the largest column sum of the transition matrix on this train track and we will bound the asymptotic translation length from below using the nesting lemma from \cite{MM} and an observation from \cite{gadretsai}.
\begin{lem}
If $\sigma$ is a maximal reccurrent train track for a pseudo-Anosov $\phi:S\rightarrow S$ and $r\geq 1$ such that $\phi^r(P_{\sigma})\subset int(P_{\sigma})$ then

$$l_C(\phi)\geq\frac{1}{r}.$$

The number $r$ is called a mixing number.
\end{lem}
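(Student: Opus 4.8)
The plan is to turn the nesting hypothesis into a monotone ``depth'' function on curves that increases by exactly one unit for every $r$ iterates of $\phi$, while changing by at most one unit between disjoint curves; telescoping this along a geodesic in $\mathcal{C}(S)$ then forces the translation length to be at least $1/r$.

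First I would propagate the nesting. Since $\phi$ is a homeomorphism, $\phi^{jr}(P_\sigma)$ is the cone of measures of the train track $\sigma_j := \phi^{jr}(\sigma)$, and applying $\phi^{jr}$ to the hypothesis $\phi^r(P_\sigma)\subset \mathrm{int}(P_\sigma)$ gives $\phi^{(j+1)r}(P_\sigma)\subset \mathrm{int}(\phi^{jr}(P_\sigma))$. Thus I obtain a strictly decreasing nested family of cones
$$P_\sigma = Q_0 \supset \mathrm{int}(Q_0)\supset Q_1 \supset \mathrm{int}(Q_1)\supset Q_2 \supset\cdots, \qquad Q_j := \phi^{jr}(P_\sigma),$$
whose intersection is the ray spanned by the attracting lamination $\lambda^+$ of $\phi$. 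For a curve $\gamma$ I then define its depth $f(\gamma)$ to be the largest $j$ for which $\gamma$ is carried by $\sigma_j$ (equivalently, the measure of $\gamma$ lies in $Q_j$). Because $\gamma$ is a simple closed curve and hence not equal to $\lambda^+$, the cones eventually exclude it and $f(\gamma)$ is finite; taking $\gamma$ carried by $\sigma$ makes $f(\gamma)\ge 0$. The function is equivariant in the sense that $f(\phi^{kr}(\gamma)) = f(\gamma)+k$, directly from $Q_{j}=\phi^{r}(Q_{j-1})$.

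The heart of the argument is the following claim, which is exactly where the Masur--Minsky nesting lemma and the observation of \cite{gadretsai} enter: if $i(\gamma,\gamma')=0$ then $|f(\gamma)-f(\gamma')|\le 1$. After conjugating by $\phi^{-jr}$ it suffices to show that whenever $\gamma'$ is carried by $\sigma_1=\phi^r(\sigma)$ --- so that the measure of $\gamma'$ lies in the interior of $P_\sigma$ --- every curve $\gamma$ disjoint from $\gamma'$ is carried by $\sigma$, up to passing to a diagonal extension of $\sigma$, which costs one level. The nesting lemma supplies precisely this: a lamination whose measure is interior to $P_\sigma$ is carried with full support, any lamination with zero intersection number with it is carried by the diagonal extension of $\sigma$, and a curve carried by the diagonal extension is disjoint from, hence within one unit of depth of, a curve carried by $\sigma$ itself.

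Granting the claim, I would finish by telescoping. Fix $\gamma$ carried by $\sigma$ and let $\gamma = c_0, c_1,\dots,c_d = \phi^{kr}(\gamma)$ be a geodesic in $\mathcal{C}(S)$, so $d = d_{\mathcal{C}}(\gamma,\phi^{kr}(\gamma))$ and consecutive $c_i$ are disjoint. The claim gives $f(c_{i})-f(c_{i-1})\le 1$ for each $i$, so summing the $d$ terms yields $k = f(\phi^{kr}(\gamma))-f(\gamma) \le d$. Dividing by $kr$ and letting $k\to\infty$ along this subsequence,
$$l_{\mathcal{C}}(\phi) = \lim_{n\to\infty}\frac{d_{\mathcal{C}}(\gamma,\phi^{n}(\gamma))}{n} \ge \lim_{k\to\infty}\frac{k}{kr} = \frac{1}{r}.$$
The main obstacle is the depth-change claim: one must invoke the nesting lemma in the correct direction and with the sharp constant, controlling the slack introduced by the diagonal extension so that disjointness drops the depth by at most a single level rather than an unspecified bounded amount. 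I expect this, rather than the propagation or the telescoping, to require the careful work.
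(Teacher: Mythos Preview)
The paper does not prove this lemma; it is quoted as a known fact, attributing it to the Masur--Minsky nesting lemma \cite{MM} together with the observation of Gadre--Tsai \cite{gadretsai}. Your proposal is a correct reconstruction of that argument: the nested cones $Q_j=\phi^{jr}(P_\sigma)$, the depth function, the equivariance $f(\phi^{kr}\gamma)=f(\gamma)+k$, and the telescoping along a curve-complex geodesic are exactly how the bound is obtained there.

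The obstacle you isolate at the end is in fact not an obstacle. The hypothesis is that $\sigma$ is \emph{maximal}, so every complementary region is a trigon or a once-punctured monogon and $\sigma$ admits no proper diagonal extension; hence $PE(\sigma)=P(\sigma)$, and the nesting lemma says directly that any curve disjoint from a curve carried in $\mathrm{int}(P_\sigma)$ is already carried by $\sigma$ itself, with no extra level lost. One cosmetic point: rather than the two-sided claim $|f(\gamma)-f(\gamma')|\le 1$, which becomes awkward if an intermediate vertex of the geodesic is not carried by any $\sigma_j$, use only the one-sided consequence of nesting (if $f(\gamma')\ge 1$ and $i(\gamma,\gamma')=0$ then $f(\gamma)\ge f(\gamma')-1$). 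Starting the geodesic at $\phi^{kr}(\gamma)$, which has depth at least $k$, and walking back, the depth drops by at most one per step; this forces $d_{\mathcal C}(\gamma,\phi^{kr}\gamma)\ge k-f(\gamma)$, and dividing by $kr$ and letting $k\to\infty$ finishes.
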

We can find a train track for $\tilde{\phi}_{r_i}$ by smoothing intersections of curves according to the orientation of the Dehn twists being performed.  From this train track we can explicitly compute the transition matrix and find bounds for the dilitation by looking at the maximum row or column sum.  If we order the the measures of the train train $\mu_1\dots\mu_{2p+q}$ such that the the measures corresponds to the curve in the same ordering we have the following if $q$ is even, denoting $i(\alpha_x,\alpha_y)$ by $I_{x,y}$ and $2p+q=n$.  We also note that if $q$ is odd we alter only the last two rows and the column sums are similar.
$$\left(\begin{smallmatrix}
1 & 2i  &  2I_{2,3}   & 0    & 0  & 0  & 0 &.\\
I_{1,2}&I_{1,2}i+1&I_{2,3}(I_{1,2}i+1)&0&0&0&0&.\\
0&I_{2,3}i&1+I_{2,3}^2i+I_{3,4}^2i&I_{3,4}i&I_{3,4}I_{4,5}i&0&0&.\\
0&0&I_{3,4}&1&I_{4,5}&0&0&.\\
0&0&I_{3,4}I_{4,5}i&I_{4,5}i&1+I_{4,5}^2i+I_{5,6}^2i&I_{5,6}i&I_{5,6}I_{6,7}i&.\\
.&.&.&.&.&.&.&.\\
.&.&.&.&.&.&.&.\\
.&.&.&.&.&.&.&.\\
.&.&0&0&I_{n-3,n-2}I_{n-2,n-1}i&I_{n-2,n-1}i&1+I_{n-2,n-1}^2i+I_{n-1,n}^2i&I_{n-1,n}i\\
.&.&.&.&.& 0 & I_{n-1,n} & 1
\end{smallmatrix}\right)$$ 
The intersection numbers are always $1$ or $2$.  The column sums are then bounded above by $16i+9$, therefore the dilatation $\lambda(\tilde{\phi}_{r_i})\leq 16i+9$.  If we then consider cyclic coverings defined by making a cut along an arc connecting the two punctures on the right hand side of Figure 1 and fill in the punctures at those two branch points we have a surface corresponding to each integral point of the ray defined by the rational number $r=\frac{p}{q}$.  Further if we choose to lift the mapping class $\tilde{\phi}_{r_i}$ to the $i$th cyclic cover we have a mapping class on each surface whose dilatations have the same bounds.  We denote the lifted mapping classes $\phi_{r_i}$.  An example for $i=4$ and $r=\frac{2}{3}$ is depicted in Figure 2 with the lifted mapping class
$$\phi_{\frac{2}{3}_4}=\tau_{B_2}\tau_{A_2}^{-4}\tau_{c_2}\tau_{B_3}\tau_{A_3}^{-4}\tau_{c_3}\tau_{B_4}\tau_{A_4}^{-4}\tau_{c_4}\tau_{B_1}\tau_{A_1}^{-4}\tau_{c_1}.$$
Labeled in Figure 2 we have $Ai,j$, $Bi,j$, and $c,j$ we denoted $A_j=\cup_iAi,j$ and $B_j=\cup_iBi,j$.

\begin{figure}[h]
\centering\includegraphics{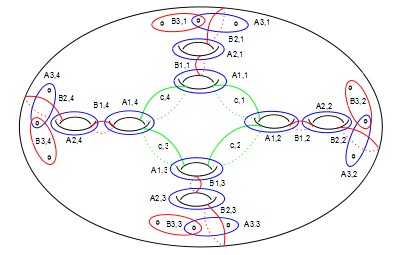}
\caption{A lifted set of curves}
\end{figure}

Further we notice that each lifted map has an $i$th root which is also psuedo-Anosov.  This mapping class can be defined as performing Dehn twists with the correct orientation to a single lift of each curve and composing with the deck transformation.  In the example in Figure 2 the 4th root of 
$$\phi_{\frac{2}{3}_4}=\rho\tau_{B_1}\tau_{A_1}^{-4}\tau_{c_1}$$
Where $\rho$ is the rotation clockwise by an angle of $\frac{\pi}{2}$ defined by the deck transformation.  We obtain the following inequality.
$$\log(\lambda(\phi_{r_i}^{\frac{1}{i}}))\leq\frac{\log(16i+9)}{i}$$
We find a mixing number for the mapping classes $\phi_{r_i}^{\frac{1}{i}}$ in order to establish the bound on asymptotic translation length.  We first notice that the image of the measure associated to the curve $A1,2$ is positive on all measures after $i$ iterations.  Then we notice that any measure will be positive on one of the measures $A1,i$ within $(2p+q)i$ iterations and after another $i$ iterations will be positive on $A1,2$.  Therefore any measure will be positive after at most $(2+2p+q)i$.  Then for any given $r=\frac{p}{q}$ we have the following.
$$l_C(\phi_{r_i}^{\frac{1}{i}})\geq\frac{1}{(2+2p+q)i}$$  
We can now finish the proof of our main theorem.

\begin{proof}
Lemma 5 gives us an upperbound for rational rays or fixed genus rays.  
$$K_{g,n}\leq \frac{1}{\log(\mid\chi_{g,n}\mid\slash N)}\asymp \frac{1}{\mid\chi_{g,n}\mid}.$$
For fixed genus rays we have 
$$\log(\lambda(\psi_i))\leq\frac{C_1\log(\mid\chi_{g,n}\mid)}{\mid\chi_{g,n}\mid}$$
for some $C_1>0$ and 
$$l_{\mathcal{C}}(\psi_i)\geq \frac{1}{C_2\mid\chi_{g,n}\mid}$$
for some $C_2>0$.  Together with Lemma 6 we have 
$$K_{g,n}\geq \frac{l_{\mathcal{C}}(\psi_i)}{\lambda(\psi_i)}\leq\frac{C_2}{C_1\log(\mid\chi_{g,n}\mid)}\asymp\frac{1}{\log(\mid\chi_{g,n}\mid)}.$$  
Together these bounds give us that for fixed genus 
$$K_{g,n}\asymp\frac{1}{\log(\mid\chi_{g,n}\mid)}.$$
For rational rays we substitute the mapping classes $\phi_{r_i}^{\frac{1}{i}}$ for the rational ray of slope $r$.

\end{proof}

\bibliography{mybib}
\bibliographystyle{alpha}

\end{document}